\newcommand{\rad}{\mathrm{rad}}
\newcommand{\rr}{\mathrm{r}}
\newtheorem{theorem}{Theorem}[section]
\newtheorem{lemma}[theorem]{Lemma}
\newtheorem{cor}[theorem]{Corollary}
\theoremstyle{definition}
\newtheorem{example}[theorem]{Example}
\theoremstyle{remark}
\numberwithin{equation}{section}
\begin{document}

\baselineskip=17pt

\title[A property of an isometry]
{An algebraic property 
of an isometry between the groups of invertible elements 
in Banach algebras}

\author{Osamu~Hatori}
\address{Department of Mathematics, Faculty of Science, 
Niigata University, Niigata 950-2181 Japan}
\curraddr{}
\email{hatori@math.sc.niigata-u.ac.jp}

\thanks{The author was partly 
supported by the Grants-in-Aid for Scientific 
Research, The 
Ministry of Education, Science, Sports and Culture, Japan.}

\keywords{Banach algebras, isometries, groups of the invertible elements}

\subjclass[2000]{47B48,46B04}

\maketitle

\begin{abstract}
We show that if $T$ is an isometry (as metric spaces) between 
the invertible groups of 
unital Banach algebras, then 
$T$ is extended to a surjective real-linear isometry 
up to translation between the 
two Banach algebras. 
Furthermore if the underling algebras are 
closed unital 
standard operator algebras, $(T(e_A))^{-1}T$ is extended to 
a surjective real algebra isomorphism; 
if $T$ is a surjective isometry from the invertible 
group of a unital commutative Banach algebra onto that of 
a unital 
semisimple Banach algebra, then $(T(e_A))^{-1}T$ is extended to a 
surjective isometrical real algebra 
isomorphism between the two underling algebras.
\end{abstract}
%
%
%
%
%
%
%
%
\section{Introduction}
According to the definition the metric or the topological, 
and the algebraic structures of a Banach algebra are 
connected with each other. 
In the actual situation these structures are tightly connected
in the sense that 
some structure restores another one, for certain Banach algebras. 
The multiplication in a $C(X)$-space is restored by 
the structure as a Banach space;
the Banach-Stone theorem states that the existence of an isometric 
isomorphism as Banach spaces from the Banach algebra 
$C(X)$ 
of the complex valued continuous functions on a compact Hausdorff
space $X$ onto another one $C(Y)$ implies that 
$Y$ is homeomorphic to $X$, hence $C(X)$ is isometrically isomorphic as 
Banach algebras to $C(Y)$. Several generalizations 
including in \cite{ja1,ja2,ja3,naga} are investigated. 

Our main concern here is with the algebraic structure of isometries between
the invertible groups (the groups of all the invertible elements) 
of unital Banach algebras:
is an (metric-space) isometry 
between the invertible groups of unital 
(semisimple) Banach algebras 
multiplicative or anitimultiplicative, or preserving the square?  
Note that a unital surjective isometry between 
unital semisimple commutative Banach algebras need not be multiplicative even 
if the given isometry is assumed to be {\it complex-linear}.
We mainly
considered  commutative Banach algebras in \cite{hatori}.
In this paper we investigate with or without assuming being commutative, and
we show that a unital isometry from the invertible group 
in a closed unital standard operator algebras onto another one is 
multiplicative or antimultiplicative. 
We also show that a unital 
isometry from the invertible group of 
a unital commutative Banach algebra onto that 
of a unital semisimple Banach algebra is multiplicative. 
The hypothesis that the latter Banach algebra is semisimple is essential
(see Example \ref{dame}). 
\section{Extension of isometries}
In this section we show that an isometry between the groups of the invertible 
elements in unital Banach algebras is extended to an real-linear 
map up to  translation between the 
two Banach algebras of the 
form of a real-linear isometry followed by adding a radical element.

We begin by showing 
a local Mazur-Ulam theorem, which was proved in \cite{hatori}, 
with a proof for the sake of convenience.
\begin{lemma}\label{lmu}
Let ${\mathcal B}_1$ and ${\mathcal B}_2$ be real normed spaces, 
${\mathcal U}_1$ and ${\mathcal U}_2$ 
non-empty open subsets of ${\mathcal B}_1$ and 
${\mathcal B}_2$ respectively. Suppose that ${\mathcal T}$ is 
a surjective isometry from ${\mathcal U}_1$ onto 
${\mathcal U}_2$. 
If $f,g\in {\mathcal U}_1$ 
satisfy that $(1-r)f+rg\in {\mathcal U}_1$ for every 
$r$ with $0\le r \le 1$, then the equality
\[
{\mathcal T}(\frac{f+g}{2})=\frac{{\mathcal T}(f)+{\mathcal T}(g)}{2}
\]
holds.
\end{lemma}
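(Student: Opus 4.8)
The plan is to reduce the statement to the classical Mazur--Ulam phenomenon and to transplant its reflection proof into this local setting. Write $m=\frac{f+g}{2}$, $a=\mathcal T(f)$, $b=\mathcal T(g)$ and $m'=\frac{a+b}{2}$; the desired conclusion is precisely $\mathcal T(m)=m'$. Since $\mathcal T$ is an isometry, the point $w:=\mathcal T(m)$ satisfies $\|w-a\|=\|m-f\|=\tfrac12\|f-g\|=\tfrac12\|a-b\|$ and likewise $\|w-b\|=\tfrac12\|a-b\|$, so $w$ is a \emph{metric} midpoint of $a$ and $b$. In a general normed space this does not force $w=m'$ (the space need not be strictly convex), which is exactly why a reflection argument, rather than convexity, is needed.

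The engine I would isolate first is the following local avatar of Väisälä's argument: if $W$ is a bounded subset of a normed space that is symmetric about a point $c$ (that is, $2c-W=W$), then every surjective isometry of $W$ onto itself fixes $c$. To prove it, note that the central reflection $\rho(x)=2c-x$ is a surjective self-isometry of $W$, and set $\lambda=\sup\{\,\|R(c)-c\|:R\ \text{a surjective self-isometry of}\ W\,\}$, which is finite because $W$ is bounded. For any such $R$ the composite $\widehat R=\rho\circ R^{-1}\circ\rho\circ R$ is again a surjective self-isometry of $W$, and a direct computation using $\rho(c)=c$ gives
\[
\|\widehat R(c)-c\|=\|R(c)-(2c-R(c))\|=2\,\|R(c)-c\|.
\]
Hence $2\,\|R(c)-c\|\le\lambda$ for every admissible $R$, so $2\lambda\le\lambda$ and therefore $\lambda=0$, forcing $R(c)=c$ for all $R$.

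To apply this I would introduce the two central reflections $\varphi(x)=2m-x$ on $\mathcal B_1$ and $\psi(y)=2m'-y$ on $\mathcal B_2$, and seek a bounded open set $V$ with $m\in V\subseteq\mathcal U_1$, $2m-V=V$, and $2m'-\mathcal T(V)=\mathcal T(V)$. Granting such a $V$, the map $\sigma:=\mathcal T^{-1}\circ\psi\circ\mathcal T\circ\varphi$ is a surjective self-isometry of $V$: indeed $\varphi$ carries $V$ onto $V$, $\mathcal T$ carries $V$ isometrically onto the set $\mathcal T(V)$ which $\psi$ preserves, and $\mathcal T^{-1}$ returns into $V$. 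The engine lemma then yields $\sigma(m)=m$, and since $\varphi(m)=m$ this unwinds to $\mathcal T^{-1}(2m'-w)=m$, i.e. $2m'-w=w$, that is $w=m'$, as required.

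The main obstacle is the construction of $V$, since the reflections $\varphi,\psi$ are global while $\mathcal T$ lives only on $\mathcal U_1$; concretely one must ensure simultaneous symmetry on both sides, the delicate point being that the reflected point $2m'-w$ actually lies in $\mathcal U_2$. This is where the segment hypothesis enters. Because $\{(1-r)f+rg:0\le r\le1\}$ is a compact subset of the open set $\mathcal U_1$ and is itself invariant under $\varphi$, it has an $\varepsilon$-neighbourhood inside $\mathcal U_1$, giving room to start from a bounded set symmetric about $m$ and containing $f,g,m$; one then alternately symmetrizes the domain about $m$ and the range about $m'$ — using that $\mathcal T^{-1}$ is an isometry as well — and passes to the intersection of the resulting decreasing family, checking that $m$ remains interior and that both symmetries survive in the limit. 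Verifying that this limiting neighbourhood is nonvoid, bounded, and two-sided symmetric, so that $\sigma$ is a genuine surjective self-isometry of it, is the part that will require the most care.
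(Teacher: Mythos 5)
Your engine lemma is sound --- it is V\"ais\"al\"a's reflection argument, and it is essentially the same mechanism the paper uses: the paper iterates the single map $Q=\psi_1\circ{\mathcal T}^{-1}\circ\psi_2\circ{\mathcal T}$ on the metric-midpoint set and derives $\|Q^{2^n}(c_1)-c_1\|=2^{n+1}\|c_2-{\mathcal T}(c_1)\|$, so that boundedness forces ${\mathcal T}(c_1)=c_2$; that is your $2\lambda\le\lambda$ argument in different packaging. The genuine gap is that the set $V$ to which your engine is applied is never constructed, and for the pair $f,g$ itself it in general cannot be constructed inside ${\mathcal U}_1$. The hypothesis allows $\|f-g\|$ to be arbitrarily large compared with the width $\varepsilon$ of the tube that the segment admits inside ${\mathcal U}_1$, while any natural candidate for $V$ --- e.g.\ the metric-midpoint set $\{u:\|u-f\|=\|u-g\|=\frac12\|f-g\|\}$ --- has diameter comparable to $\|f-g\|$ in a non-strictly-convex space (in $\mathbb{R}^2$ with the sup norm and $f=(-1,0)$, $g=(1,0)$ it is the segment $\{0\}\times[-1,1]$), so it sticks far out of a thin tube. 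Your iterative symmetrization does not circumvent this: to keep $m$ in the next set of the decreasing family you need precisely $2m'-w\in{\mathcal T}(V_n)$, and the only evident way to guarantee that is to know already that $w=m'$; moreover a decreasing intersection of bounded sets in an infinite-dimensional space may be empty. So the point you flag as ``delicate'' is not a technicality; it is the whole problem, and it is left unsolved.

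What is missing is the paper's two-step structure. Step one: prove the midpoint identity only for pairs $h,h'$ that are close, i.e.\ $\frac{\|h-h'\|}{2}<\varepsilon$ with $\{u:\|u-h\|<\varepsilon,\ \|u-h'\|<\varepsilon\}\subset{\mathcal U}_1$ and $\{a:\|a-{\mathcal T}(h)\|<\varepsilon,\ \|a-{\mathcal T}(h')\|<\varepsilon\}\subset{\mathcal U}_2$. For such pairs no iteration is needed: the full midpoint set $L_1=\{u:\|u-h\|=\|u-h'\|=\frac12\|h-h'\|\}$ automatically lies in ${\mathcal U}_1$, is symmetric about $\frac{h+h'}{2}$, and satisfies ${\mathcal T}(L_1)=L_2$ with $L_2$ symmetric about $\frac{{\mathcal T}(h)+{\mathcal T}(h')}{2}$; your engine lemma then applies verbatim with $V=L_1$. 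Step two: for the given $f,g$, use compactness of the segment to get a uniform $\varepsilon$, subdivide dyadically into $h_k=f+\frac{k}{2^n}(g-f)$ with $\frac{\|f-g\|}{2^n}<\varepsilon$, apply step one to each pair $(h_k,h_{k+2})$ to obtain ${\mathcal T}(h_k)+{\mathcal T}(h_{k+2})-2{\mathcal T}(h_{k+1})=0$, and note that vanishing second differences force $k\mapsto{\mathcal T}(h_k)$ to be affine along the subdivision, which yields ${\mathcal T}(\frac{f+g}{2})=\frac{{\mathcal T}(f)+{\mathcal T}(g)}{2}$. This propagation step is what lets the lemma reach distant pairs, and it is absent from your proposal; with it added, your engine lemma would indeed complete the proof.
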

\begin{proof}
Let $h,h'\in {\mathcal U}_1$. Suppose that $\varepsilon >0$ satisfies that 
$\frac{\|h-h'\|}{2}<\varepsilon$, and 
\[
\{u\in B_1:\|u-h\|<\varepsilon,\,\,\|u-h'\|<\varepsilon \}\subset {\mathcal U}_1,
\]
\[
\{a\in B_2:\|a-{\mathcal T}(h)\|<\varepsilon,\,\,
\|a-{\mathcal T}(h')\|<\varepsilon \}\subset {\mathcal U}_2.
\]
We will show that 
${\mathcal T}(\frac{h+h'}{2})=\frac{{\mathcal T}(h)+{\mathcal T}(h')}{2}$.
Set 
$r=\frac{\|h-h'\|}{2}$ and let
\[
L_1=
\{u\in {\mathcal B}_1:\|u-h\|=r=\|u-h'\|\},
\]
\[
L_2=
\{a\in {\mathcal B}_2:\|a-{\mathcal T}(h)\|=r=\|a-{\mathcal T}(h')\|\}.
\]
Set also $c_1=\frac{h+h'}{2}$ and $c_2=\frac{{\mathcal T}(h)+
{\mathcal T}(h')}{2}$. Then we have ${\mathcal T}(L_1)=L_2$, 
$c_1\in L_1 \subset {\mathcal U}_1$, and $c_2\in L_2 \subset {\mathcal U}_2$. 
Let 
\[
\psi_1(x)=h+h'-x \quad (x\in {\mathcal B}_1)
\]
and 
\[
\psi_2(y)={\mathcal T}(h)+{\mathcal T}(h')-y \quad (y\in {\mathcal B}_2).
\]
Then we see that $\psi_1(c_1)=c_1$, $\psi_1(L_1)=L_1$, 
and $\psi_2(L_2)=L_2$. 
Let $Q=\psi_1\circ{\mathcal T}^{-1}\circ\psi_2\circ{\mathcal T}$. 
A simple calculation shows that  
\[
2\|w-c_1\|=\|\psi_1(w)-w\|,\quad (w\in L_1)
\]
and 
\[
\|\psi_1(z)-w\|=\|\psi_1\circ Q^{-1}(z)-Q(w)\|,\quad (z,w \in L_1)
\]
hold. 
Applying these equations we see that
\begin{multline*}
\|Q^{2^{k+1}}(c_1)-c_1\|=\|\psi_1\circ Q^{2^{k+1}}(c_1)-c_1\|
\\
=\|\psi_1\circ Q^{2^k}(c_1)-Q^{2^k}(c_1)\|
=2\|Q^{2^k}(c_1)-c_1\|
\end{multline*}
hold for every nonzero integer $k$, where $Q^{2^n}$ denotes the 
$2^n$-time composition of $Q$. 
By induction we see for every non-negative integer $n$ that 
\[
\|Q^{2^n}(c_1)-c_1\|=2^{n+1}\|c_2-{\mathcal T}(c_1)\|
\]
holds. 
Since $Q(L_1)=L_1$ and $L_1$ is bounded we see that $c_2=
{\mathcal T}(c_1)$, i.e., ${\mathcal T}(\frac{h+h'}{2})=
\frac{{\mathcal T}(h)+{\mathcal T}(h')}{2}$. 

We assume that $f$ and $g$ are as described. Let
\[
K=\{(1-r)f+rg:0\le r\le 1\}.
\]
Since $K$ and ${\mathcal T}(K)$ are compact, there is $\varepsilon >0$ with
\[
d(K,{\mathcal B}_1\setminus {\mathcal U}_1)>\varepsilon, \quad 
d({\mathcal T}(K), {\mathcal B}_2\setminus {\mathcal U}_2)>\varepsilon,
\]
where 
$d(\cdot , \cdot )$ denotes the distance of two sets.
Then for every $h\in K$ we have 
\[
\{u\in {\mathcal B}_1:\|u-h\|<\varepsilon\}\subset {\mathcal U}_1
\]
and
\[
\{b\in {\mathcal B}_2:\|b-{\mathcal T}(h)\|<\varepsilon \}\subset {\mathcal U}_2.
\]
Choose a natural number $n$ with $\frac{\|f-g\|}{2^n}<\varepsilon$. 
Let 
\[
h_k=\frac{k}{2^n}(g-f)+f
\]
for each $0\le k\le 2^n$. By the first part of the proof we have
\[
{\mathcal T}(h_k)+{\mathcal T}(h_{k+2})-2{\mathcal T}(h_{k+1})=0
\qquad \text{($k$)}
\]
holds for $0\le k\le 2^n-2$. For $0\le k\le 2^n-4$,
adding the equations ($k$), 2 times of ($k+1$), and ($k+2$) we have
\[
{\mathcal T}(h_k)+{\mathcal T}(h_{k+4})-2{\mathcal T}(h_{k+2})=0,
\]
whence the equality 
\[
{\mathcal T}(\frac{f+g}{2})=\frac{{\mathcal T}(f)+{\mathcal T}(g)}{2}
\]
holds by induction on $n$.
\end{proof}
Note that an isometry between open sets of Banach algebras need not be 
extended to a linear isometry between these Banach algebras.
\begin{example}
Let $X=\{x,y\}$ be a compact Hausdorff space consisting of two points.
Let 
\[
{\mathcal U}=\{f\in C(X):\|f\|<1\}\cup
\{f\in C(X):\|f-f_0\|<1\},
\]
where $f_0\in C(X)$ is defined as $f_0(x)=0,\,f_0(y)=10$.
Suppose that 
\[
{\mathcal T}:{\mathcal U}\to {\mathcal U}
\]
is defined as ${\mathcal T}(f)=\tilde f$ if $\|f\|<1$ and 
${\mathcal T}(f)=f$ if $\|f-f_0\|<1$, where 
\begin{equation*}
\tilde f(t)=
\begin{cases}
-f(t),& t=x \\
f(t),& t=y.
\end{cases}
\end{equation*}
Then ${\mathcal T}$ is an isometry from ${\mathcal U}$ onto itself, 
while it cannot be extended to a real linear isometry up to translation.
\end{example}
Let $A$ be a unital Banach algebra. The group of all the invertible 
elements in $A$ is called the invertible group and is denoted 
by $A^{-1}$. The identity in $A$ is denoted by $e_A$.
The (Jacobson) radical for a given Banach algebra $A$ is denoted by 
$\rad (A)$. The spectrum of $a\in A$ is denoted by $\sigma (a)$ 
and $\rr (a)$ is the spectral radius for $a\in A$.

Surjective isometries between the invertible groups of 
unital Banach algebra
is extended to a real linear isometry up to translation (Theorem \ref{main}).
\begin{lemma}\label{rad}
Let $B$ be a unital Banach algebra and $a\in B$. Suppose that 
$\rr (fa)=0$ for every $f\in B^{-1}$. Then $a\in \rad (B)$.
\end{lemma}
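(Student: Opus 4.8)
The plan is to reduce the statement to the classical characterization of the Jacobson radical in terms of the spectral radius, and then to bridge the gap between invertible and arbitrary multipliers by a complex-analytic argument. Recall that for a unital Banach algebra $B$ one has
\[
\rad(B)=\{a\in B:\rr(xa)=0\ \text{for every}\ x\in B\}.
\]
Thus it suffices to show that the hypothesis, which only concerns invertible $f$, already forces $\rr(xa)=0$ for \emph{every} $x\in B$. The difficulty is precisely that the hypothesis is given on $B^{-1}$, while the conclusion quantifies over all of $B$, and $B^{-1}$ need not be dense.

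To carry this out I would fix an arbitrary $x\in B$ and consider the entire $B$-valued function $\lambda\mapsto (x-\lambda e_B)a=xa-\lambda a$ on $\mC$. For every $\lambda$ outside the compact set $\sigma(x)$ the element $x-\lambda e_B$ is invertible, so the hypothesis yields $\rr\big((x-\lambda e_B)a\big)=0$. Hence the nonnegative function
\[
h(\lambda)=\rr\big(xa-\lambda a\big),\qquad \lambda\in\mC,
\]
vanishes on the complement of the compact set $\sigma(x)$, that is, on a whole neighborhood of infinity.

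Next I would invoke Vesentini's theorem that the spectral radius of an analytic Banach-algebra-valued function is subharmonic: since $\lambda\mapsto xa-\lambda a$ is entire, $h$ is subharmonic on $\mC$. Choose $R>0$ so large that $\sigma(x)\subset\{\lambda:|\lambda|<R\}$; then $h\equiv 0$ on the circle $\{\lambda:|\lambda|=R\}$, and the maximum principle for (upper semicontinuous) subharmonic functions gives $\sup_{|\lambda|\le R}h=0$. As $h\ge 0$ and $R$ is arbitrary, $h$ vanishes identically, so in particular $\rr(xa)=h(0)=0$. Since $x\in B$ was arbitrary, the characterization above delivers $a\in\rad(B)$.

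The crux of the argument, and the only genuinely nonformal step, is this passage from invertible to general multipliers. The spectral radius is merely upper semicontinuous and is neither subadditive nor submultiplicative, so no elementary estimate transports the vanishing of $\rr(fa)$ from invertible $f$ to noninvertible $x$. What makes it work is the combination of two facts: the subharmonicity of $\lambda\mapsto\rr(xa-\lambda a)$ and its vanishing near infinity (coming from the invertibility of $x-\lambda e_B$ for $\lambda\notin\sigma(x)$). The maximum principle then forces the function to be identically zero, and evaluating at $\lambda=0$ finishes the proof.
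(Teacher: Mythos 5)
Your proof is correct, but it takes a genuinely different route from the paper's. You reduce the lemma to the classical characterization $\rad(B)=\{a\in B:\rr(xa)=0\ \text{for every}\ x\in B\}$ and bridge the gap between invertible and arbitrary multipliers analytically: for $\lambda\notin\sigma(x)$ the factor $x-\lambda e_B$ is invertible, so the function $h(\lambda)=\rr\bigl((x-\lambda e_B)a\bigr)$ vanishes near infinity, and since $h$ is subharmonic by Vesentini's theorem, the maximum principle forces $h\equiv 0$; in particular $\rr(xa)=h(0)=0$. The paper instead argues algebraically and is self-contained. It first checks that $\alpha a+e_B\in B^{-1}$ for every $\alpha\in\mC$ (otherwise $-1/\alpha\in\sigma(e_Ba)$, contradicting $\rr(e_Ba)=0$), and then works directly with the definition of the radical as the intersection of the maximal left ideals: if $a\notin L$ for some maximal left ideal $L$, then $L+Ba=B$, so $fa+e_B\in L$ for some $f\in B$; choosing $\alpha\notin\sigma(f)$, so that $f-\alpha e_B\in B^{-1}$, the identity
\[
(\alpha a+e_B)^{-1}(fa+e_B)=(\alpha a+e_B)^{-1}(f-\alpha e_B)a+e_B
\]
exhibits an element of $L$ of the form $ga+e_B$ with $g=(\alpha a+e_B)^{-1}(f-\alpha e_B)\in B^{-1}$; since a proper left ideal contains no invertible element, $-1\in\sigma(ga)$, contradicting $\rr(ga)=0$. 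Note that both arguments hinge on the same elementary seed --- an arbitrary $f\in B$ becomes invertible after the shift $f-\alpha e_B$ with $\alpha\notin\sigma(f)$ --- but you remove the shift with subharmonicity and the maximum principle, while the paper removes it with the invertibility of $\alpha a+e_B$ and the left-ideal structure. Your route is shorter modulo the two imported theorems (the spectral-radius characterization of the radical and Vesentini's theorem, both nontrivial, the former being close in depth to the lemma itself); the paper's route uses nothing beyond the definition of the radical and compactness of the spectrum, which better matches the elementary tenor of the rest of the paper.
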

\begin{proof}
First we will show that $\alpha a+e_B\in B^{-1}$ for every complex number 
$\alpha$. Suppose not. There is a complex number $\alpha_0$ with 
$\alpha_0 a +e_B \not\in B^{-1}$; $-1\in \sigma (\alpha_0 a)$. So 
$\alpha_0\ne 0$ and $-\frac{1}{\alpha_0}\in \sigma (a)=\sigma (e_Ba)$, 
hence $0<\rr (e_Ba)$, which contradicts to the assumption.

We will show that $a\in L$ whenever $L$ is a maximal left ideal of $B$, 
which will force that $a\in \rad (B)$. Suppose that there exists a left 
maximal ideal $L$ of $B$ with $a\not\in L$. Then $L+Ba$ is a left 
ideal of $B$ which properly contains $L$, so $L+Ba=B$ for 
$L$ is a maximal left ideal. Thus there is $f\in B$ with 
$fa+e_B\in L$. Let $\alpha$ be a complex number such that 
$f-\alpha e_B \in B^{-1}$; such an $\alpha$ exists since 
the spectrum is a compact set. 
Since $\alpha a+e_B\in B^{-1}$ by the first part of 
the proof, 
\begin{multline}
(\alpha a+e_B)^{-1}(f-\alpha e_B)a+e_B \\=
(\alpha a+e_B)^{-1}(f-\alpha e_B)a+
(\alpha a+e_B)^{-1}(\alpha a+e_B)\\=
(\alpha a+e_B)^{-1}(fa+e_B)\in L
\end{multline}
hold. Thus 
$(\alpha a+e_B)^{-1})(f-\alpha e_B)+e_B$ is singular, hence 
\[
\rr ((\alpha a+e_B)^{-1})(f-\alpha e_B)a)>0,
\] 
which is a contradiction since 
$(\alpha a+e_B)^{-1}(f-\alpha e_B)\in B^{-1}$.
\end{proof}

\begin{theorem}\label{main}
Let $A$ and $B$ be unital Banach algebras. Suppose that 
$T$ is a surjective isometry from $A^{-1}$ onto $B^{-1}$. Then there exists a 
surjective real-linear isometry $\tilde T_0$ from $A$ onto $B$ and 
$u_0\in \rad (B)$ such that 
$T(a)=\tilde T_0(a)+u_0$ for every $a\in A^{-1}$.
\end{theorem}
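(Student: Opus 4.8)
The plan is to exploit that $A^{-1}$ and $B^{-1}$ are open subsets of the real Banach spaces underlying $A$ and $B$, so that $T$ is a surjective isometry between open sets to which the local Mazur--Ulam lemma (Lemma \ref{lmu}) applies. First I would record the geometry of $A^{-1}$: for $f\in A^{-1}$ the positive ray $\{\lambda f:\lambda>0\}$ lies in $A^{-1}$, since $(1-t)f+t\lambda f=(1+t(\lambda-1))f$ is invertible for $0\le t\le 1$ when $\lambda>0$; more generally, whenever a segment $[f,g]$ lies in $A^{-1}$, Lemma \ref{lmu} gives midpoint preservation, and iterating dyadically together with the continuity of $T$ shows that $T$ is real-affine along every segment contained in $A^{-1}$. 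Applied to positive rays this makes $\lambda\mapsto T(\lambda f)$ affine on $(0,\infty)$, so $c:=\lim_{\lambda\to 0^+}T(\lambda f)$ exists; since $\|T(\lambda f)-T(\lambda g)\|=\lambda\|f-g\|\to 0$, the value $c$ is independent of $f$. Writing $\tilde T_0:=T-c$ on $A^{-1}$, I then obtain $\tilde T_0(\lambda f)=\lambda\tilde T_0(f)$ for $\lambda>0$, the isometry identity $\|\tilde T_0(f)-\tilde T_0(g)\|=\|f-g\|$, and (letting $\lambda\to 0^+$) $\|\tilde T_0(f)\|=\|f\|$.

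Next I would produce the linear model. The open ball $W=\{a:\|a-e_A\|<1\}$ is convex and contained in $A^{-1}$, so by Lemma \ref{lmu} and continuity $T$ is affine on $W$; thus $T(a)=L(a)+c$ on $W$ for a bounded real-linear isometry $L$, which extends by linearity to all of $A$. Running the identical construction for the surjective isometry $T^{-1}$ near $T(e_A)$ yields a real-linear isometry that is inverse to $L$, so $L\colon A\to B$ is in fact a surjective real-linear isometry.

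The crux is to promote the identity $T=L+c$ from $W$ to all of $A^{-1}$. Consider $E=\{f\in A^{-1}:\tilde T_0(f)=L(f)\}$. I expect to show that $E$ is open, is a positive cone (by homogeneity, since $\tilde T_0(\lambda f)=\lambda\tilde T_0(f)=\lambda L(f)=L(\lambda f)$), and is stable under invertible segments: if $f\in E$ and $[f,g]\subseteq A^{-1}$, then affineness of both $\tilde T_0$ and $L$ on the segment, combined with agreement on a subsegment near $f$ (possible as $E$ is open), forces $g\in E$. Hence $E$ contains every point joined to $e_A$ by a polygonal path inside $A^{-1}$, that is, the whole component of $e_A$. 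The main obstacle is the possible disconnectedness of $A^{-1}$: no invertible segment joins two components and positive scaling stays within a component, so local affineness alone cannot cross over. I anticipate that closing this gap is the hard part, to be handled by a genuinely global use of the isometry together with the construction for $T^{-1}$: for $f$ in another component one has $\|\tilde T_0(f)-L(w)\|=\|f-w\|$ for every $w$ in the open component of $e_A$, and this equidistance data, refined using positive homogeneity, the norm identity $\|\tilde T_0(f)\|=\|f\|$, and surjectivity of $L$, should force $\tilde T_0(f)=L(f)$. Setting $\tilde T_0:=L$ then gives $T(a)=\tilde T_0(a)+c$ for every $a\in A^{-1}$, with $\tilde T_0$ a surjective real-linear isometry.

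Finally I would identify $u_0:=c$ with a radical element by invoking Lemma \ref{rad}. From $T(\lambda f)=\lambda(T(f)-c)+c\in B^{-1}$ for $\lambda>0$ and $f\in A^{-1}$, and the fact that $T$ is onto $B^{-1}$, I get $\lambda(B^{-1}-c)=B^{-1}-c$ for all $\lambda>0$; since $\lambda B^{-1}=B^{-1}$ this rearranges to $B^{-1}+(\lambda-1)c=B^{-1}$, and as the admissible translates form a subgroup of $(\mR,+)$ containing an interval, I conclude $B^{-1}+tc=B^{-1}$ for every $t\in\mR$. Then for any $f\in B^{-1}$, any $\xi\in\mC\setminus\{0\}$ and any $t\in\mR\setminus\{0\}$, the element $\xi f^{-1}+tc$ is invertible, whence $\xi e_B+tfc=f(\xi f^{-1}+tc)\in B^{-1}$ and so $-\xi/t\notin\sigma(fc)$. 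As $-\xi/t$ ranges over all of $\mC\setminus\{0\}$, this forces $\sigma(fc)\subseteq\{0\}$, i.e. $\rr(fc)=0$ for every $f\in B^{-1}$. Lemma \ref{rad} now yields $c\in\rad(B)$, and taking $u_0=c$ completes the proof.
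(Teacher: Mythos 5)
Your construction of the constant $c$, the positive homogeneity of $\tilde T_0=T-c$, the linear model $L$ obtained from affineness of $T$ on a convex neighborhood of $e_A$, and the final identification $c\in\rad(B)$ via Lemma \ref{rad} (through the translation invariance $B^{-1}+tc=B^{-1}$, which is a correct alternative to the paper's argument using the preimages $c_{\lambda}=T^{-1}(-\lambda f^{-1})$) are all essentially sound, and up to that point your architecture parallels the paper's. But the step you yourself flag as the hard part --- extending the identity $\tilde T_0=L$ beyond the connected component of $e_A$ --- is a genuine gap, and it is exactly the crux of the theorem: $A^{-1}$ is disconnected already for $A=C(S^1)$ (components indexed by winding number), so the polygonal-path argument really does stop at the component of $e_A$. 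Your proposed repair, namely that the equidistance relations $\|\tilde T_0(f)-L(w)\|=\|f-w\|$ for all $w$ in the component of $e_A$ ``should force'' $\tilde T_0(f)=L(f)$, is left entirely speculative, and the naive principle behind it is false in general normed spaces: two distinct points can be equidistant from a nonempty open set (in $(\mathbb{R}^2,\|\cdot\|_{\infty})$ the points $(0,0)$ and $(0,1)$ are both at distance $|w_1|$ from every $w$ in the open set $\{w:|w_1|>|w_2|+1\}$). So openness of the set of witnesses cannot, by itself, finish the proof; some structure tied to the identity element must enter.

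The paper closes precisely this gap with a numerical range argument, which is the missing idea. Setting $P=\tilde T_0^{-1}\circ T_0$ (an isometry of $A^{-1}$ into $A$), the paper shows $P$ fixes the four points $a\pm 2\|a\|e_A$ and $a\pm 2i\|a\|e_A$ for each $a\in A^{-1}$ (the first via agreement on the cone $\Omega_A$, the others via oddness and segment additivity), whence $\|P(a)-a\mp 2\|a\|e_A\|=2\|a\|=\|P(a)-a\mp 2i\|a\|e_A\|$. By \cite[Lemma 2.6.3]{palmer}, which expresses $\sup\{\mathrm{Im}\,\lambda:\lambda\in W(x)\}$ as $\inf_{t>0}t^{-1}(\|e_A-itx\|-1)$, these four equalities give $W(P(a)-a)\subset\mathbb{R}$ and $iW(P(a)-a)\subset\mathbb{R}$, hence $W(P(a)-a)=\{0\}$; since the norm is dominated by $e$ times the numerical radius (\cite[Theorem 2.6.4]{palmer}), $P(a)=a$ for every $a\in A^{-1}$, with no reference to components whatsoever. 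It is worth noting that your equidistance data would in fact suffice, because all four special points lie in the component of $e_A$ (each is joined by a segment in $A^{-1}$ to a nonzero scalar multiple of $e_A$, and all such multiples lie in one component of a complex algebra); but converting that data into $\tilde T_0(f)=L(f)$ requires exactly this numerical-range machinery, which your proposal lacks.
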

\begin{proof}
Since $T$ is an isometry, $\lim_{A^{-1}\ni a\to 0}T(a)$ exists. Let $u_0=
\lim_{A^{-1}\ni a\to 0}T(a)$. Let $f$ be an arbitrary element in $B^{-1}$. 
We will show that 
$\rr (fu_0)=0$, which will force that $u_0\in \rad (B)$ by Lemma \ref{rad}. 
Suppose that 
$\lambda \in \sigma (fu_0)$ and 
$\lambda \ne 0$. 
Let $c_{\lambda}=T^{-1}(-\lambda f^{-1})$. 
By Lemma \ref{lmu} 
\[
T(\frac{c_{\lambda}}{2})=T(\frac{(1-s)c_{\lambda}+sc_{\lambda}}{2})
=\frac{T((1-s)c_{\lambda})+T(sc_{\lambda})}{2}
\]
holds for every $0\le s\le 1$. Letting $s\to 0$, we see that 
\[
B^{-1}\ni T(\frac{c_{\lambda}}{2})=\frac{T(c_{\lambda})+u_0}{2}
=\frac{-\lambda f^{-1}+u_0}{2},
\]
so $-\lambda +fu_0 \in B^{-1}$, which is a contradiction 
since 
$\lambda \in \sigma (fu_0)$. 
Thus we see that $\sigma (fu_0)=\{0\}$, or $\rr (fu_0)=0$, so 
$u_0\in \rad (B)$ by Lemma \ref{rad}.

Define $T_0:A^{-1}\to B^{-1}$ by $T_0(a)=T(a)-u_0$. Since 
$u_0+B^{-1}=B^{-1}$ for $u_0 \in \rad(B)$ (cf. \cite[p.69]{dales}), $T_0$ is 
well-defined and bijective. 
We will show that $T_0(-f)=-T_0(f)$ for every $f\in A^{-1}$. 
Let $f\in A^{-1}$. Then $-f\in A^{-1}$, and for every integer $n$, 
$-f+\frac{i}{n}f\in A^{-1}$. 
We also see  
\[
(1-r)f+r(-f+\frac{i}{n}f) \in A^{-1}
\]
for every $0\le r \le 1$ and every  integer $n$. 
Then by Lemma \ref{lmu} 
\[
T_0\left( \frac{i}{2n}f\right)=T_0\left(\frac{f+(-f+\frac{i}{n}f)}{2}\right)
=\frac{T_0(f)+T_0(-f+\frac{i}{n}f)}{2}
\]
hold. Letting $n\to \infty$ we have 
\begin{equation}
T_0(-f)=-T_0(f).
\end{equation}

Next we will show that 
\begin{equation}\label{12}
T_0\left(\frac{f}{2}\right)=\frac{T_0(f)}{2}
\end{equation}
holds 
for every $f\in A^{-1}$. 
Let $f\in A^{-1}$. 
Then for every $1>\varepsilon >0$ and every $0\le r \le 1$
\[
(1-r)f +r\varepsilon f \in A^{-1}.
\]
Hence 
$T_0\left(\frac{f+\varepsilon f}{2}\right)=\frac{T_0(f)+T_0(\varepsilon f)}{2}$ 
holds by Lemma \ref{lmu}, then letting $\varepsilon \to 0$ 
the equation (\ref{12}) holds.

Let $f\in A^{-1}$. 
Suppose that $T_0(kf)=kT_0(f)$ holds for a positive integer $k$. 
Then 
\[
T_0\left( \frac{f+kf}{2}\right) =\frac{T_0(f)+T_0(kf)}{2}=\frac{(k+1)T_0(f)}{2}
\]
and by 
(\ref{12}) 
\[
T_0\left(\frac{f+kf}{2}\right)=\frac{T_0((k+1)f)}{2}
\]
holds, hence by induction 
$T_0(nf)=nT_0(f)$ holds for every positive integer $n$. 
Then for any pair of positive integers $m$ and $n$, 
\[
mT_0(\frac{n}{m}f)=T_0(m\frac{n}{m}f)=T_0(nf)=nT_0(f)
\]
holds, hence $T_0(\frac{n}{m}f)=\frac{n}{m}T_0(f)$ holds. 
By continuity of $T_0$, 
$T_0(rf)=rT_0(f)$ 
holds for every $f\in A^{-1}$ and $r>0$. 
Henceforth  
\begin{equation}\label{3.15}
T_0(rf)=rT_0(f)
\end{equation}
holds for every $f\in A^{-1}$ and for a non-zero real number $r$ 
since $T_0(-f)=-T_0(f)$.

Applying Lemma \ref{lmu} and (\ref{12}) we see that
\begin{equation}\label{sum}
T_0(f+g)=T_0(f)+T_0(g)
\end{equation}
holds for every pair $f$ and $g$ in $A^{-1}$ whenever 
$(1-r)f+rg\in A^{-1}$ holds for every $0\le r \le 1$. In particular 
(\ref{sum}) holds if $f,g\in \Omega_A$, where
\[
\Omega_A=\{a\in A:\text{$\|a-re_A\|<r$ holds for some positive real number 
$r$}\}
\]
is a convex subset of $A^{-1}$. 

Define the map $\tilde T_0:A\to B$ by $\tilde T_0(0)=0$ and 
\[
\tilde T_0(f)=T_0(f+2\|f\|e_A)-T_0(2\|f\|e_A)
\]
for a non-zero $f\in A$. The map $\tilde T_0$ is well-defined since 
$f+2\|f\|e_A$ and $2\|f\|e_A$ are 
in $\Omega_A$ for every non-zero $f\in A$ and 
$T_0$ is defined on $A^{-1}\supset \Omega_A$. 
If, in particular, $f\in \Omega_A$, then 
$T_0(f+2\|f\|e_A)=T_0(f)+T_0(2\|f\|e_A)$ holds, so that 
\begin{equation}\label{onomega}
\tilde T_0(f)=T_0(f)
\end{equation}
holds. 

We will show that $\tilde T_0$ is real-linear. Let $f\in A\setminus \{0\}$. Then 
$f+re_A\in \Omega_A$ for every $r\ge 2\|f\|$, whence by (\ref{sum})
\begin{multline*}
T_0(f+2\|f\|e_A)+T_0(re_A)\\
=T_0(f+2\|f\|e_A+re_A)=T_0(f+re_A)+T_0(2\|f\|e_A),
\end{multline*}
so that
\begin{equation}\label{UOmega}
\tilde T_0(f)=T_0(f+re_A)-T_0(re_A)
\end{equation}
holds for every $r\ge 2\|f\|$. Let $f,g \in A$. 
Then $\tilde T_0(f+g)=\tilde T_0(f)+\tilde T_0(g)$ holds if $f=0$ or $g=0$. Suppose that 
$f\ne 0$ and $g\ne 0$. Then by (\ref{sum}) and (\ref{UOmega}) we have
\begin{equation*}\begin{split}
\tilde T_0(f+g)&=
 T_0(f+g+2\|f\|e_A+2\|g\|e_A)-T_0(2\|f\|e_A+2\|g\|e_A) \\
&= T_0(f+2\|f\|e_A)+T_0(g+2\|g\|e_A)-T_0(2\|f\|e_A)-T_0(2\|g\|e_A) \\
&= \tilde T_0(f)+\tilde T_0(g)
\end{split}
\end{equation*}
holds. If $f=0$ or $r=0$ then $\tilde T_0(rf)=r\tilde T_0(f)$. Suppose that 
$f\ne 0$ and $r\ne 0$. If $r>0$, then by (\ref{3.15})
\begin{equation*}\begin{split}
\tilde T_0(rf) &=
T_0(rf+2\|rf\|e_A)-T_0(2\|rf\|e_A) \\
&= T_0(r(f+2\|f\|e_A))-T_0(r2\|f\|e_A) \\
&=rT_0(f+2\|f\|e_A)-rT_0(2\|f\|e_A)=r\tilde T_0(f)
\end{split}
\end{equation*}
If $r<0$, then
\[
\tilde T_0(rf)=(-r)\left(T_0(-f+2\|f\|e_A)-T_0(2\|f\|e_A)\right).
\]
Since $-f+2\|f\|e_A$, $f+2\|f\|e_A\in \Omega_A$ we have
\[
T_0(-f+2\|f\|e_A)-T_0(2\|f\|e_A)=-T_0(f+2\|f\|e_A)+T_0(2\|f\|e_A).
\]
It follows that 
\[
\tilde T_0(rf)=(-r)\left(-T_0(f+2\|f\|e_A)+T_0(2\|f\|e_A)\right)=r\tilde T_0(f).
\]

We will show that $\tilde T_0$ is surjective.
Let $a\in B$.  
Then 
\[
(T_0(e_A))^{-1}a+re_B\in \Omega_B\subset B^{-1},
\]
so 
\[
a+T_0(re_A)=a+rT_0(e_A)\in  B^{-1}
\]
holds whenever $\|(T_0(e_A))^{-1}a\|<r$ and $\|a\|<r$. We also have 
\[
\|T_0^{-1}(a+T_0(re_A))-r\|=\|a+T_0(re_A)-T_0(re_A)\|<r,
\]
thus $T_0^{-1}(a+T_0(re_A))\in \Omega_A$ holds 
whenever $\|(T_0(e_A))^{-1}a\|<r$ and $\|a\|<r$. Let $f= T_0^{-1}(a+T_0(re_A))-re_A
\in A$. 
Then 
$f+re_A=T_0^{-1}(a+T_0(re_A))\in \Omega_A$. Hence by (\ref{sum}) we see that 
\begin{multline*}
T_0(f+re_A)+T_0(2\|f\|e_A)\\
=T_0(f+2\|f\|e_A+re_A)=T_0(f+2\|f\|e_A)+T_0(re_A),
\end{multline*}
so we have 
\[
a=T_0(f+re_A)-T_0(re_A)=T_0(f+2\|f\|e_A)-T_0(2\|f\|e_A)=\tilde T_0(f).
\]

We will show that $\tilde T_0$ is an isometry. Since $\tilde T_0$ is linear, 
it is sufficient to show that 
$\|\tilde T_0(f)\|=\|f\|$ for every 
$f\in A$. If $f=0$, the equation clearly holds. Suppose 
that$f\ne 0$. Then 
\[
\|\tilde T_0(f)\|=\|T_0(f+2\|f\|e_A)-T_0(2\|f\|e_A)\|=\|f+2\|f\|e_A-2\|f\|e_A
\|=\|f\|
\]
hold.

We will show that $\tilde T_0$ is an extension of $T_0$, i.e., $\tilde T_0(f)=T_0(f)$ for 
every $f\in A^{-1}$. Let 
$P=\tilde T_0^{-1}\circ T_0:A^{-1}\to A$. 
For every $a\in A^{-1}$, 
\begin{equation}\label{1}
P(a+2\|a\|e_A)=a+2\|a\|e_A
\end{equation}
holds for $a+2\|a\|e_A \in \Omega_A$ and $T_0=\tilde T_0$ on $\Omega_A$ by 
(\ref{onomega}). Since 
$T_0(-f)=-T_0(f)$ holds for every $f\in A^{-1}$ and $\tilde T_0^{-1}$ is real-linear, 
we see that 
\begin{equation}\label{2}
P(a-2\|a\|e_A)=-P((-a)+2\|-a\|e_A)=a-2\|a\|e_A
\end{equation}
holds for every $a\in A^{-1}$.

We will show that 
\[
P(a\pm 2i\|a\|e_A)=a\pm 2i\|a\|e_A
\]
holds for every $a\in A^{-1}$. 
Since 
\begin{multline*}
\|t(a+2\|a\|e_A)+(1-t)(\pm 2i\|a\|e_A)
-2(t\pm (1-t)i)\|a\|e_A\| \\
=t\|a\|<2|t\pm (1-t)i|\|a\|
\end{multline*}
hold
\[
t(a+2\|a\|e_A)+(1-t)(\pm 2i\|a\|e_A)\in A^{-1}
\]
holds for every $0\le t\le 1$ we see by (\ref{sum}) that 
\[
T_0(a+2\|a\|e_A)+T_0(\pm 2i\|a\|e_A)=T_0(a+
2\|a\|e_A\pm 2i\|a\|e_A).
\]
In a way similar we have
\[
T_0(a\pm 2i\|a\|e_A)+T_0(2\|a\|e_A)=
T_0(a\pm 2i\|a\|e_A+2\|a\|e_A),
\]
hence
\begin{multline}\label{star}
\tilde T_0 (a)= T_0(a+2\|a\|e_A)-T_0(2\|a\|e_A) \\
=T_0(a\pm 2i\|a\|e_A)-T_0(\pm 2i\|a\|e_A)
\end{multline}
holds for every $a\in A^{-1}$. For every $0\le t\le 1$
\[
t(\pm 2i\|a\|e_A)+(1-t)4\|a\|e_A \in A^{-1}
\]
holds, hence
\[
T_0(\pm 2i\|a\|e_A+4\|a\|e_A)=
T_0(\pm 2i\|a\|e_A)+T_0(4\|a\|e_A),
\]
so that 
\[
T_0(\pm 2i\|a\|e_A)=
T_0(\pm 2i\|a\|e_A+4\|a\|e_A)-T_0(4\|a\|e_A)=
\tilde T_0(\pm 2i\|a\|e_A).
\]
Thus we see by (\ref{star}) that 
\begin{multline}\label{3}
P(a\pm 2i\|a\|e_A)=
\tilde T_0^{-1}(T_0(a\pm 2i\|a\|e_A)) \\
=\tilde T_0^{-1}(\tilde T_0(a)+T_0(\pm 2i\|a\|e_A))\\
=\tilde T_0^{-1}(\tilde T_0(a)+\tilde T_0(\pm 2i\|a\|e_A))=
a\pm 2i \|a\|e_A
\end{multline}
holds for every $a\in A^{-1}$ since $\tilde T_0$ is real-linear.
Applying (\ref{1}) and (\ref{2})
\begin{multline}\label{5}
2\|a\|=\|a\pm 2\|a\|e_A-a\|=\|P(a\pm 2\|a\|e_A)-P(a)\| \\
\|a\pm 2\|a\|e_A -P(a)\|=
\|P(a)-a\pm 2\|a\|e_A\|
\end{multline}
holds for every $a\in A^{-1}$. 
In a same way we have by (\ref{3}) that 
\begin{equation}\label{55}
2\|a\|=\|P(a)-a\pm 2i\|a\|e_A\|
\end{equation}
holds for every $a\in A^{-1}$. 
For an element $b\in B$ the numerical range of $b$ is denoted by 
$W(b)$. 
By (\ref{55}) and \cite[Lemma 2.6.3]{palmer} 
\begin{multline}
\sup \{\mathrm{Im}(\lambda):\lambda \in W(P(a)-a)\} \\
=\inf_{t>0}t^{-1}(\|e_A-it(P(a)-a)\|-1) \\
\le 
2\|a\|(\|e_A-\frac{i}{2\|a\|}(P(a)-a)\|-1)=0.
\end{multline}
Since $W(-P(a)+a)=-W(P(a)-a)$ we have
\begin{multline}
-\inf \{\mathrm{Im} (\lambda):\lambda \in W(P(a)-a)\} \\
=\sup \{\mathrm{Im}(\lambda): \lambda \in W(-P(a)+a)\} \\
=
\inf _{t>0} t^{-1}(\|e_A-it(-P(a)+a)\|-1) \\
\le
2\|a\|(\|e_A+\frac{i}{2\|a\|}(P(a)-a)\|-1)=0
\end{multline}
Thus we see that 
\[
W(P(a)-a)\subset \mathbb{R},
\]
where $\mathbb{R}$ denotes the set of real numbers. 
Applying (\ref{5}) and \cite[Lemma 2.6.3]{palmer} 
in a same way we see that 
\[
iW(P(a)-a)=W(i(P(a)-a))\subset \mathbb{R}.
\]
It follows that 
\[
W(P(a)-a)=\{0\}.
\]
Since 
\[
\|P(a)-a\|\le e\|P(a)-a\|_W
\]
holds (
cf. \cite[Theorem 2.6.4]{palmer}), where $\|\cdot \|_W$ denotes the 
numerical radius, 
we see that $P(a)=a$ holds for every $a\in A^{-1}$.

Since $T(a)=T_0(a)+u_0$ for $a\in A^{-1}$ by the definition of $T_0$, 
we conclude that $T(a)=\tilde T_0(a)+u_0$ holds for every $a\in A^{-1}$.
\end{proof}
\section{Multiplicativity or antimultiplicativity of isometries}
We proved the following in \cite{hatori}. The proof involves much about 
commutativity and  semisimplicity of the given Banach algebra $A$.
\begin{theorem}\label{st}
Let $A$ be a unital semisimple commutative Banach algebra and $B$ 
a unital Banach algebra. Suppose ${\mathfrak A}$ and ${\mathfrak B}$
are open subgroups of $A^{-1}$ and $B^{-1}$ respectively. 
Suppose that $T$ is a surjective isometry 
from ${\mathfrak A}$ onto
${\mathfrak B}$. Then $B$ is a semisimple and commutative, and 
$(T(e_A))^{-1}T$ is extended to an isometrical real algebra 
isomorphism from $A$ onto $B$. In particular, 
$A^{-1}$ is isometrically isomorphic to 
$B^{-1}$ as a metrizable group.
\end{theorem}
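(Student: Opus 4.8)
The plan is to run the argument of Theorem~\ref{main} inside the open subgroups and then to extract multiplicativity from the interplay between the metric and the spectrum. First I would reduce to the unital normalization: set $S=(T(e_A))^{-1}T$. Since $T(e_A)\in{\mathfrak B}$ and ${\mathfrak B}$ is a group, left translation by $(T(e_A))^{-1}$ carries ${\mathfrak B}$ onto itself, so $S$ maps ${\mathfrak A}$ onto ${\mathfrak B}$ with $S(e_A)=e_B$. The price is that $S$ is no longer visibly an isometry --- regaining the isometric property is part of the conclusion --- so I would keep $T$ itself for all metric arguments and pass to $S$ only for the algebraic ones.

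Second, I would extend $T$ to a real-linear map exactly as in Theorem~\ref{main}, now working with the open subgroups. The convex set $\Omega_A$ from the proof of Theorem~\ref{main} lies in the principal component of $A^{-1}$, hence in the open subgroup ${\mathfrak A}$, and every auxiliary element used there ($f+2\|f\|e_A$, $nf$, $\pm 2i\|a\|e_A$, and so on) again lies in ${\mathfrak A}$ because ${\mathfrak A}$ is a subgroup containing $\Omega_A$. Thus $u_0=\lim_{{\mathfrak A}\ni a\to 0}T(a)$ exists by completeness, and Lemma~\ref{lmu} together with the numerical-range estimates of Theorem~\ref{main} should yield a surjective real-linear isometry $\tilde T_0\colon A\to B$ with $T=\tilde T_0+u_0$ on ${\mathfrak A}$. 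The one point needing extra care is the location of $u_0$: the spectral argument feeding Lemma~\ref{rad} must be run with $f$ ranging over elements of ${\mathfrak B}$ rather than over all of $B^{-1}$, so I would first secure semisimplicity of $B$ and only then conclude $u_0\in\rad(B)=\{0\}$, after which $T=\tilde T_0$ is already real-linear and $S$ is a unital real-linear isometry.

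Third, I would establish that $B$ is commutative and semisimple and that $S$ is multiplicative. The intended route is to transport semisimplicity and commutativity from $A$ through $\tilde T_0$ and to promote $S$ to an algebra map. Since the radical is characterized spectrally (Lemma~\ref{rad}) and invertibility is reflected in the metric of the invertible group via $\mathrm{dist}(a,\,A\setminus A^{-1})=1/\|a^{-1}\|$, the aim is to show that the real-linear isometry $\tilde T_0$ preserves invertibility, and hence the spectrum and $\rr$. Granting this, a unital surjective real-linear invertibility-preserving map into a semisimple commutative Banach algebra is a Jordan homomorphism by a Gleason--Kahane--\.{Z}elazko / Kowalski--S\l odkowski-type theorem, and over a commutative target a Jordan homomorphism is multiplicative; applied to $S$ this yields a surjective real algebra isomorphism, forcing $B$ to be commutative and semisimple as well. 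Matching of spectral radii then forces $S$ to be isometric, and its restriction to $A^{-1}$ is the claimed isomorphism of metrizable groups onto $B^{-1}$.

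I expect the main obstacle to be the third step. As the Introduction records, a unital (even complex-linear) surjective isometry between unital semisimple commutative Banach algebras need not be multiplicative, so multiplicativity cannot come from linearity and the norm alone; it must be squeezed out of the fact that $T$ is an isometry of the \emph{invertible groups}. The crux is therefore to convert the group-and-metric data into genuine invertibility-preservation (equivalently, spectrum- or square-preservation) for $\tilde T_0$, so that the rigidity theorems above become applicable.
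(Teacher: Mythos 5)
The first thing to note is that this paper contains no proof of Theorem \ref{st} at all: it is quoted from \cite{hatori}, with only the remark that the proof there ``involves much about commutativity and semisimplicity of the given Banach algebra $A$.'' Your proposal must therefore stand on its own, and it does not, because its third step --- which you yourself label ``the crux'' --- is never carried out. Your second step is largely sound as far as it goes (the observation that any open subgroup of $A^{-1}$ is closed, hence contains the principal component, hence contains $\Omega_A$, all positive scalar multiples of $e_A$, and the elements $\pm 2i\|a\|e_A\in\exp(A)$, is correct). But after that step all you would possess is a unital surjective real-linear isometry $S$ with $S({\mathfrak A})={\mathfrak B}$, and, as the Introduction emphasizes, even a unital \emph{complex-linear} surjective isometry between unital semisimple commutative Banach algebras need not be multiplicative. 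So multiplicativity must be extracted entirely from the datum $S({\mathfrak A})={\mathfrak B}$, and your sketch offers no mechanism for that conversion --- it only names it as the obstacle. Moreover, the rigidity theorem you invoke points the wrong way: Gleason--Kahane--\.{Z}elazko-type theorems need the \emph{target} algebra to be commutative and semisimple, so they could only be applied to $S^{-1}\colon B\to A$, not to $S$ (commutativity and semisimplicity of $B$ are part of the conclusion, not hypotheses). Even for $S^{-1}$ you would need a genuinely real-linear version of such a theorem: the Hadamard-factorization argument behind Gleason--Kahane--\.{Z}elazko collapses because $\lambda\mapsto\chi(S^{-1}(e^{\lambda b}))$ is no longer holomorphic when $S^{-1}$ is merely real-linear, and non-vanishing of $\chi\circ S^{-1}$ is known only on ${\mathfrak B}\supset\exp(B)$, not on all of $B^{-1}$. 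Supplying these arguments is precisely the content of the cited proof, which (per the paper's remark) leans heavily on the commutativity and semisimplicity of $A$.

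There is also a circularity in your handling of $u_0$. You correctly notice that Lemma \ref{rad} cannot be applied verbatim, since the isometry only yields $\rr(fu_0)=0$ for $f\in{\mathfrak B}$, whereas the proof of Lemma \ref{rad} must feed in elements such as $(\alpha a+e_B)^{-1}(f-\alpha e_B)$, which need not lie in ${\mathfrak B}$. But your remedy --- ``first secure semisimplicity of $B$, then conclude $u_0\in\rad(B)=\{0\}$'' --- cannot be executed in that order: in your own plan, semisimplicity of $B$ is obtained from the algebra isomorphism of step three, which needs the real-linear extension of step two, and the construction of that extension in Theorem \ref{main} uses $u_0\in\rad(B)$ from the outset (the identity $u_0+B^{-1}=B^{-1}$ is what makes $T_0$ a surjective isometry onto the invertible group; it underlies both the surjectivity of $\tilde T_0$ and the verification that $P(a)=a$, and without it one cannot even assert that $T_0(e_A)=T(e_A)-u_0$ is invertible). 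So the dependency graph of your argument contains a cycle. A smaller but genuine error: the formula $\mathrm{dist}(a,A\setminus A^{-1})=1/\|a^{-1}\|$ is false in general unital Banach algebras; only the inequality $\mathrm{dist}(a,A\setminus A^{-1})\ge 1/\|a^{-1}\|$ holds, so spectra and spectral radii cannot be read off from the metric of the invertible group in the way your third step presumes.
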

In the following comparison result as the above we make use of Theorem 
\ref{main}.
\begin{cor}\label{comsem}
Let $A$ be a unital commutative Banach algebra and 
$B$ a semisimple Banach algebra.
Suppose that $T$ is a surjective isometry from $A^{-1}$ onto 
$B^{-1}$. Then $(T(e_A))^{-1}T$ is extended to a surjective isometrical 
real algebra isomorphism from $A$ onto $B$. Moreover, 
$A$ is semisimple and $B$ is commutative. In particular, 
$A^{-1}$ is isometrically isomorphic to $B^{-1}$ as a metrizable group.
\end{cor}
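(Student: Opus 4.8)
The plan is to reduce everything to Theorem \ref{st}. Since $B$ is semisimple, $\rad(B)=\{0\}$, so the element $u_0\in\rad(B)$ produced by Theorem \ref{main} is $0$; thus $T$ is the restriction to $A^{-1}$ of a surjective real-linear isometry $\tilde T_0\colon A\to B$. Being a bijection with $\tilde T_0(A^{-1})=B^{-1}$, the map $\tilde T_0$ carries invertibles onto invertibles in both directions, and $\tilde T_0(e_A)=T(e_A)$. Once I know that $A$ is semisimple, $A$ is a unital semisimple commutative Banach algebra, and applying Theorem \ref{st} to $T$ with $\mathfrak A=A^{-1}$ and $\mathfrak B=B^{-1}$ yields at one stroke all the remaining assertions: $B$ is (semisimple and) commutative, $(T(e_A))^{-1}T$ extends to an isometrical real algebra isomorphism from $A$ onto $B$, and $A^{-1}$ is isometrically isomorphic to $B^{-1}$ as a metrizable group. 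So the whole matter is to prove that $A$ is semisimple.

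I would prove semisimplicity of $A$ by transporting the radical through $\tilde T_0$ and testing it with Lemma \ref{rad}. Fix $a\in\rad(A)$ and put $b=\tilde T_0(a)$; the goal is $b\in\rad(B)$, which by Lemma \ref{rad} amounts to $\rr(gb)=0$ for every $g\in B^{-1}$. Fix such a $g$. For $\mu\neq 0$ one has
\[
\mu e_B-gb=g\bigl(\mu g^{-1}-b\bigr),
\]
so $\mu e_B-gb\in B^{-1}$ if and only if $\mu g^{-1}-b\in B^{-1}$. Now $\mu g^{-1}\in B^{-1}$, so there is $h\in A^{-1}$ with $\tilde T_0(h)=\mu g^{-1}$, and by real-linearity
\[
\mu g^{-1}-b=\tilde T_0(h)-\tilde T_0(a)=\tilde T_0(h-a).
\]
Since $\tilde T_0$ preserves invertibility in both directions, this is invertible exactly when $h-a\in A^{-1}$; and $h-a=h(e_A-h^{-1}a)\in A^{-1}$ because $h^{-1}a\in\rad(A)$ forces $e_A-h^{-1}a\in A^{-1}$. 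Hence $\mu e_B-gb\in B^{-1}$ for every $\mu\neq 0$, i.e. $\sigma(gb)\subseteq\{0\}$ and $\rr(gb)=0$. As $g$ was arbitrary, Lemma \ref{rad} gives $b\in\rad(B)=\{0\}$, so $\tilde T_0(a)=0$ and, by injectivity of $\tilde T_0$, $a=0$. Therefore $\rad(A)=\{0\}$.

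The step I expect to be the crux is precisely this verification that $\tilde T_0$ sends $\rad(A)$ into $\rad(B)$: a priori $\tilde T_0$ is only real-linear and respects neither the multiplication nor the complex structure, so one cannot simply compute spectra through it. The device that dissolves the difficulty is to test membership in $\rad(B)$ by means of Lemma \ref{rad} and to pull the relevant invertibility question back to $A$ through $\tilde T_0^{-1}$ rather than pushing it forward: the scalar $\mu$ is absorbed into $h=\tilde T_0^{-1}(\mu g^{-1})\in A^{-1}$, after which only additivity of $\tilde T_0$ and the ideal property of the radical are used. Note that commutativity of $A$ plays no role in this step; it enters only afterwards, to meet the hypotheses of Theorem \ref{st}.
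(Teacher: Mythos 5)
Your proof is correct, and while its skeleton (get $u_0=0$ from semisimplicity of $B$, extend $T$ to $\tilde T_0$, prove $A$ semisimple, then quote Theorem \ref{st}) matches the paper's, your argument for the crux --- semisimplicity of $A$ --- is genuinely different. The paper uses a translation trick: for $a\in\rad(A)$ it defines $T_a(b)=T(a+b)$, which is again a surjective isometry from $A^{-1}$ onto $B^{-1}$ because $\rad(A)+A^{-1}=A^{-1}$, invokes Theorem \ref{main} a \emph{second} time to extend $T_a$ to a real-linear $\tilde T_a$, and then compares $\tilde T_a(e_A/n)=\tilde T(a+e_A/n)$ as $n\to\infty$ to conclude $\tilde T(a)=0$, hence $a=0$. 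You instead show directly that $\tilde T_0(\rad(A))\subseteq\rad(B)$ by testing with Lemma \ref{rad}: the factorization $\mu e_B-g\tilde T_0(a)=g\bigl(\mu g^{-1}-\tilde T_0(a)\bigr)$, surjectivity of $T$, and additivity pull the invertibility question back to $A$, where $h-a\in A^{-1}$ follows from the ideal property of the radical (indeed one can shortcut your factorization $h-a=h(e_A-h^{-1}a)$ by citing $\rad(A)+A^{-1}=A^{-1}$ again). Each route has something to recommend it: the paper's avoids all spectral reasoning but pays by re-running the heavy extension theorem on a translated map; yours invokes Theorem \ref{main} only once, reuses the cheap Lemma \ref{rad}, and in fact proves the more portable statement that any additive bijection carrying $A^{-1}$ onto $B^{-1}$ maps radical into radical --- from which semisimplicity of $A$ drops out immediately when $\rad(B)=\{0\}$. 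Your closing observation that commutativity of $A$ is irrelevant to this step is also accurate; it is needed only to satisfy the hypotheses of Theorem \ref{st}.
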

\begin{proof}
By Theorem \ref{main} there is a $u_0\in \rad (B)$ such that $T-u_0$ is 
extended to a real-linear isometry from $A$ onto $B$. Since 
$B$ is semisimple (cf. \cite[Theorem 2.5.8]{dales}), $u_0=0$, hence 
$T$ is extended to a surjective real-linear 
isometry $\tilde T$ from $A$ onto $B$ since $B$ is semisimple. We will 
show that $A$ is semisimple. Let $a\in \rad (A)$ and let
$T_a:A^{-1}\to B^{-1}$ be defined as $T_a(b)=T(a+b)$ for $b\in A^{-1}$. 
Then $T_a$ is well-defined and a surjective isometry since $a+A^{-1}=A^{-1}$ 
for $a\in \rad (A)$. By Theorem \ref{main} 
$T_a$ is also extended to a surjective real-linear isometry $\tilde T_a$ from 
$A$ onto $B$. For every positive integer
\[
\tilde T_a\left(\frac{e_A}{n}\right)=T_a\left(\frac{e_A}{n}\right)=T
\left(a+\frac{e_A}{n}\right)=\tilde T\left(a+\frac{e_A}{n}\right)
\]
holds. Letting $n\to \infty$ we have
\[
0=\tilde T(a),
\]
hence $a=0$ for $\tilde T$ is injective. It hollows that $\rad (A)=\{0\}$, 
or $A$ is semisimple. 
Then by Theorem \ref{st} the conclusion holds.
\end{proof}

The hypothesis that $B$ is semisimple in Corollary \ref{comsem} is essential 
as the following example (cf. \cite{hatori}) shows that a unital isometry 
from $A^{-1}$ onto $B^{-1}$ need not be multiplicative nor antimultiplicative 
unless at least one of $A$ or $B$ are  semisimple.
\begin{example}\label{dame}
Let 
\[
A_0=\{
\left(
\begin{smallmatrix}
0&a&b \\0&0&c \\ 0&0&0
\end{smallmatrix}
\right)
:a,\,\,b,\,\,c\in {\mathbb C}\}.
\]
Let 
\[
A=\{
\left(
\begin{smallmatrix}
\alpha&a&b \\0&\alpha&c \\ 0&0&\alpha
\end{smallmatrix}
\right)
:\alpha,\,\,a,\,\,b,\,\,c\in {\mathbb C}\}
\]
be the unitization of $A_0$, where the multiplication (in $A_0$) is the 
zero multiplication; $MN=0$ for every $M,N\in A_0$.
Let $B=A$ as sets, while the multiplication in $B$ is the usual multiplication 
for matrices. 
Then $A$ and $B$ are unital Banach algebras under the usual operator norm. 
Note that $A$ is commutative and $A$ nor $B$ are not semisimple. 
Note also that 
$A^{-1}=\{\left(
\begin{smallmatrix}
\alpha&a&b \\0&\alpha&c \\ 0&0&\alpha
\end{smallmatrix}
\right)\in A:\alpha\ne 0\}$ and $B^{-1}=\{\left(
\begin{smallmatrix}
\alpha&a&b \\0&\alpha&c \\ 0&0&\alpha
\end{smallmatrix}
\right)\in B:\alpha\ne 0\}$.
Define 
$T:A^{-1}\to B^{-1}$ by $T(M)=M$. 
Then $T$ is well-defined and a surjective isometry.
On the other hand $A^{-1}$ is not (group) isomorphic to $B^{-1}$, 
in particular, $T$ is not multiplicative nor antimultiplicative.
\end{example}
We show a positive result for standard operator algebras.
\begin{cor}\label{standard}
Let $X$ (resp. $Y$) be a Banach space. Suppose that $A$ (resp. $B$) 
is a unital closed 
subalgebra of ${\mathfrak B}(X)$ (resp. ${\mathfrak B}(Y)$), 
the Banach algebra of all the bounded operators on $X$ (resp. $Y$), 
which contains all finite rank operators. 
Suppose that $T$ is a surjective isometry from 
$A^{-1}$ onto $B^{-1}$. Then there exists an invertible bounded linear 
or conjugate linear operator $U:X\to Y$ such that $T(a)=T(e_A)UaU^{-1}$ 
for every 
$a\in A^{-1}$, or there exists an invertible bounded linear or conjugate 
linear operator $V:X^*\to Y$ such that $T(a)=T(e_A)Va^*V^{-1}$ for every $a\in A^{-1}$. In particular, if $T$ is unital in the sense that 
$T(e_A)=e_B$, then $T$ is multiplicative or antimultiplicative.
\end{cor}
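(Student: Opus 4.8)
The plan is to use Theorem~\ref{main} to replace the metric hypothesis by an algebraic one, and then to invoke the spatial description of invertibility-preserving maps on standard operator algebras. First I would note that a closed unital subalgebra of $\mathfrak{B}(Y)$ containing all finite rank operators is semisimple: it contains every rank-one operator and hence acts irreducibly on $Y$, so by the Jacobson density theorem $\rad(B)=\{0\}$ (and likewise $\rad(A)=\{0\}$). Consequently the radical element $u_0$ furnished by Theorem~\ref{main} is $0$, and $T$ itself extends to a surjective real-linear isometry $\tilde T\colon A\to B$ with $\tilde T|_{A^{-1}}=T$ and $\tilde T(A^{-1})=B^{-1}$.

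Next I would normalize. Set $S=(T(e_A))^{-1}\tilde T$. Left multiplication by the fixed invertible element $(T(e_A))^{-1}$ is a real-linear bijection of $B$ carrying $B^{-1}$ onto $B^{-1}$, so $S\colon A\to B$ is a unital real-linear bijection with $S(a)\in B^{-1}$ if and only if $a\in A^{-1}$; that is, $S$ preserves invertibility in both directions. Since $T(a)=T(e_A)S(a)$, the corollary will follow once $S$ is shown to be a real-algebra isomorphism or anti-isomorphism of the stated spatial form.

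The heart of the matter is to recover the complex structure: I would show that $S$ is either complex-linear or conjugate-linear. As a first step, because $te_A-ie_A=(t-i)e_A\in A^{-1}$ for every real $t$, unitality and real-linearity give $te_B-S(ie_A)\in B^{-1}$, so $\sigma(S(ie_A))$ avoids the real axis; sharpening this by applying the invertibility-preservation to elements of the form $se_A+tie_A-a$ together with the numerical-range estimates of \cite[Lemma~2.6.3]{palmer} already exploited in Theorem~\ref{main}, I would obtain $S(ie_A)=\pm ie_B$ and, pushing the same analysis beyond scalars, that the conjugate-linear part of $S$ (respectively its complex-linear part) vanishes identically. After possibly replacing $\lambda$ by $\bar\lambda$ on the scalars, $S$ thereby becomes a unital complex-linear bijection with $\sigma(S(a))=\sigma(a)$ for every $a$. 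I expect this dichotomy to be the main obstacle, since it is precisely here that a complex-analytic rigidity must be extracted from a hypothesis that is only metric and real-linear.

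It then remains to identify such a map. By the classification of spectrum-preserving (equivalently invertibility-preserving) unital linear bijections on standard operator algebras, $S$ is a Jordan isomorphism; since a standard operator algebra is prime, a Jordan isomorphism of it is an isomorphism or an anti-isomorphism, and each of these is implemented spatially, an isomorphism by $a\mapsto UaU^{-1}$ for an invertible bounded $U\colon X\to Y$ and an anti-isomorphism by $a\mapsto Va^{*}V^{-1}$ for an invertible bounded $V\colon X^{*}\to Y$. Carrying the complex-linear/conjugate-linear alternative through these identifications yields $U$ (respectively $V$) linear or conjugate-linear, and reinserting the factor $T(e_A)$ via $T(a)=T(e_A)S(a)$ produces the two displayed formulas. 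The final clause is then immediate: if $T(e_A)=e_B$ then $T=S$, so $T$ is multiplicative in the first alternative and antimultiplicative in the second.
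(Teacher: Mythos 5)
Your skeleton tracks the paper's proof for its first half: semisimplicity of $B$ forces $u_0=0$ in Theorem~\ref{main} (your Jacobson-density justification is a fine substitute for the paper's citation of \cite[Theorem~2.5.8]{dales}), so $T$ extends to a surjective real-linear isometry, and one normalizes by left multiplication with $(T(e_A))^{-1}$. But there is a genuine gap exactly at what you yourself flag as ``the main obstacle'': the claim that $S=(T(e_A))^{-1}\tilde T$ is complex-linear or conjugate-linear is never proved, only announced. The sketch you offer does not go through as stated. The numerical-range estimates of \cite[Lemma~2.6.3]{palmer} are used in the proof of Theorem~\ref{main} for the map $P=\tilde T_0^{-1}\circ T_0$, and the computation there leans on $P$ being an isometry (distances like $\|P(a\pm 2\|a\|e_A)-P(a)\|$ are evaluated by isometry before Palmer's lemma is invoked); your $S$ is an isometry composed with left multiplication by $(T(e_A))^{-1}$, which in general destroys the isometric property, and you supply no replacement estimate. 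Moreover, even granting $S(ie_A)=\pm ie_B$, the assertion that ``pushing the same analysis beyond scalars'' makes the conjugate-linear (or complex-linear) part of $S$ vanish identically is precisely the hard structural statement; soft spectral and numerical-range arguments cannot deliver it, since they never use more than real-linearity plus invertibility preservation, and the dichotomy is a rigidity phenomenon specific to the operator-algebra structure.

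The paper avoids this difficulty entirely, and this is the idea your proposal is missing: it applies Theorem~3.2 of Hou and Cui \cite{houcui}, which classifies \emph{additive} surjections between standard operator algebras carrying $A^{-1}$ onto $B^{-1}$, with no linearity over $\mathbb{C}$ assumed. Real-linearity is more than additivity, so $(T(e_A))^{-1}\tilde T_0$ satisfies the hypotheses directly, and the conclusion of that theorem --- $a\mapsto UaU^{-1}$ or $a\mapsto Va^*V^{-1}$ with $U$, $V$ linear \emph{or conjugate-linear} --- already contains the complex/conjugate dichotomy you were trying to extract by hand. In other words, the recovery of the complex structure is internal to the cited additive-map theorem, not something to be re-derived from the isometry. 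If you insist on your route (first prove the dichotomy, then quote a theorem on unital \emph{linear} spectrum-preserving bijections and primeness of standard operator algebras), the dichotomy step needs an actual proof, and proving it is essentially of the same order of difficulty as the Hou--Cui theorem itself.
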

\begin{proof}
By Theorem \ref{main} there is a $u_0\in \rad (B)$ such that $T-u_0$ is 
extended to a real-linear isometry $\tilde T_0$ from $A$ onto $B$. Since 
$B$ is semisimple (cf. \cite[Theorem 2.5.8]{dales}), $u_0=0$, hence 
$\tilde T_0=T$ on $A^{-1}$. Thus $(T(e_A))^{-1}\tilde T_0$ 
additive surjection such that $(T(e_A))^{-1}\tilde T_0(A^{-1})
=B^{-1}$. Applying  Theorem 3.2 in \cite{houcui} for 
$(T(e_A))^{-1}\tilde T_0$,
there exists an invertible bounded linear or conjugate linear operator 
$U:X\to Y$ such that $(T(e_A))^{-1}\tilde T_0(a)=UaU^{-1}$ ($a\in A$), 
or there exists an invertible bounded linear or conjugate linear 
operator 
$V:X^*\to Y$ such that $(T(e_A))^{-1}\tilde T_0(a)=Va^*V^{-1}$ ($a\in A$). 
Henceforth the conclusion holds.
\end{proof}
Let $M_n$ be the algebra of all $n\times n$ matrices over the complex number 
field. For $M\in M_n$ the spectrum  is denoted by $\sigma (M)$ and $M^t$ 
is the transpose of $M$. $E$ denotes the identity matrix. 
Let $\|\cdot\|$ and $\|\cdot\|'$ 
denote any matrix norms on $M_n$ (cf. \cite{hojo}).
\begin{cor}
If $S$ is a surjection from the group $M_n^{-1}$ of the invertible $n\times n$ 
matrices over the complex number field onto itself such that 
$\|S({M})-S({N})\|'=\|{M}-{N}\|$ for all ${M},{N}\in M_n^{-1}$, then 
there exists an invertible matrix ${U}\in M_n$ such that 
$S({M})=S(E){U}{M}{U}^{-1}$ for all ${M}\in M_n^{-1}$, or
$S({M})=S(E){U}{M}^t{U}^{-1}$ for all ${M}\in M_n^{-1}$, or 
$S(M)=S(E)U\overline{M}U^{-1}$  for all $M\in M_n^{-1}$, or
$S(M)=S(E)U\overline{M}^tU^{-1}$  for all $M\in M_n^{-1} hold$.
In particular, if $S$ is unital, then $S$ is multiplicative or 
antimultiplicative.
\end{cor}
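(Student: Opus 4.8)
The plan is to recognize this statement as the finite-dimensional specialization of Corollary \ref{standard} and to reduce it to that result. First I would equip the source copy of $M_n$ with the norm $\|\cdot\|$ and the target copy with $\|\cdot\|'$, obtaining two unital Banach algebras $A=(M_n,\|\cdot\|)$ and $B=(M_n,\|\cdot\|')$. Both are complete because $M_n$ is finite-dimensional, and any matrix norm is submultiplicative, so these genuinely are unital Banach algebras, with $A^{-1}=B^{-1}=M_n^{-1}$ as sets. The hypothesis $\|S(M)-S(N)\|'=\|M-N\|$ says precisely that $S\colon A^{-1}\to B^{-1}$ is a surjective isometry, so Theorem \ref{main} and hence Corollary \ref{standard} are available.

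Second, I would observe that $M_n=\mathfrak{B}(\mathbb{C}^n)$ is a unital closed standard operator algebra containing all finite rank operators (indeed every operator on $\mathbb{C}^n$ has finite rank), so Corollary \ref{standard} applies with $X=Y=\mathbb{C}^n$. It produces either an invertible bounded linear or conjugate linear $U\colon\mathbb{C}^n\to\mathbb{C}^n$ with $S(M)=S(E)UMU^{-1}$, or an invertible bounded linear or conjugate linear $V\colon(\mathbb{C}^n)^*\to\mathbb{C}^n$ with $S(M)=S(E)VM^*V^{-1}$. The remaining work is to unwind these four operator-theoretic cases into matrix language.

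Third, I would translate each case using the finite-dimensional identifications. A linear invertible $U$ is just an invertible matrix, giving $S(M)=S(E)UMU^{-1}$. A conjugate linear $U$ factors as $U_0\circ C$ with $C$ entrywise conjugation and $U_0$ invertible; a short computation gives $UMU^{-1}=U_0\overline{M}\,U_0^{-1}$, hence $S(M)=S(E)U_0\overline{M}\,U_0^{-1}$. For the adjoint cases I would use that $(\mathbb{C}^n)^*\cong\mathbb{C}^n$ via the dual basis and that the Banach-space adjoint $M^*$ is represented by the transpose $M^t$; the linear case then yields $S(M)=S(E)VM^tV^{-1}$ and the conjugate linear case yields $S(M)=S(E)V\overline{M}^{\,t}V^{-1}$. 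These are exactly the four asserted forms. Finally, if $S(E)=E$ the four forms read $UMU^{-1}$ and $U\overline{M}\,U^{-1}$ (both multiplicative, since $\overline{MN}=\overline{M}\,\overline{N}$) together with $UM^tU^{-1}$ and $U\overline{M}^{\,t}U^{-1}$ (both antimultiplicative, since $(MN)^t=N^tM^t$), which gives the ``in particular'' clause.

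The main obstacle I anticipate is bookkeeping rather than conceptual: carefully matching the conjugate linear and adjoint cases of Corollary \ref{standard} to the correct matrix operation, and in particular verifying that the Banach-space adjoint (not the Hilbert space adjoint) corresponds to the transpose and not the conjugate transpose, so that the conjugation appearing in the last two forms comes solely from the conjugate linearity of $U$ or $V$. A secondary point to handle with care is that an arbitrary matrix norm need not be an operator norm induced by a vector norm on $\mathbb{C}^n$; I would stress that this is harmless, since the norm enters only through the Banach-algebra hypothesis of Theorem \ref{main}, while the structural conclusion furnished by Corollary \ref{standard} depends only on the algebra $M_n$ itself.
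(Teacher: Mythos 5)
Your proposal is correct and takes essentially the same route as the paper: the paper's own proof simply invokes Corollary \ref{standard} with $X=Y=\mathbb{C}^n$ and lists the resulting four matrix forms. Your additional bookkeeping (the two algebra norms, factoring conjugate-linear maps through entrywise conjugation, and identifying the Banach-space adjoint with the transpose) just makes explicit the translation that the paper leaves to the reader.
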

\begin{proof}
By Corollary \ref{standard} there is an invertible matrix $U$ such that 
one of 
the following four occurs.
\begin{enumerate}
\item
$S(M)=S(E)UMU^{-1}$ holds for every $M\in M_n^{-1}$, 
\item
$S(M)=S(E)U\overline{M}U^{-1}$ holds for every $M\in M_n^{-1}$, 
\item
$S(M)=S(E)UM^tU^{-1}$ holds for every $M\in M_n^{-1}$, 
\item
$S(M)=S(E)U\overline{M}^tU^{-1}$ holds for every $M\in M_n^{-1}$. 
\end{enumerate}
Henceforth the conclusion holds.
\end{proof}

\end{document}